\theoremstyle{theorem}
\newtheorem{theorem}{Theorem}[section]
\newtheorem{proposition}[theorem]{Proposition}
\newtheorem{lemma}[theorem]{Lemma}
\newtheorem{corollary}[theorem]{Corollary}
\newtheorem{conjecture}[theorem]{Conjecture}
\newtheorem*{CC}{Cabling Conjecture}
\newtheorem*{TS}{Two Summands Conjecture}
\newcommand{\Z}{\mathbb{Z}}
\newcommand{\Spinc}{\text{Spin}^c}
\newcommand{\spinc}{\text{spin}^c}
\newcommand{\D}{\mathcal{D}}
\newcommand{\be}{\begin{enumerate}}
\newcommand{\ee}{\end{enumerate}}	
\newcommand{\bi}{\begin{itemize}}
\newcommand{\ei}{\end{itemize}}
\newtheorem*{rep@theorem}{\rep@title}
\newcommand{\newreptheorem}[2]{%
\newenvironment{rep#1}[1]{%
 \def\rep@title{#2 \ref{##1}}%
 \begin{rep@theorem}}%
 {\end{rep@theorem}}}
\begin{document}

\rhead{\thepage}
\lhead{\author}
\thispagestyle{empty}

%\tableofcontents
%\listoffigures

\raggedbottom
\pagenumbering{arabic}
\setcounter{section}{0}

%%%%%%%%%%%%%%%%%%%%%%%%%%%%%%%%%%%%%%%%%%%%%%%%%%%%%%%%
%%%%%%%%%%%%%%%%%%%%%%%%%%%%%%%%%%%%%%%%%%%%%%%%%%%%%%%%
%%%%%%%%%%%%%%%%%%%%%%%%%%%%%%%%%%%%%%%%%%%%%%%%%%%%%%%%

\title{A note on cabled slice knots and reducible surgeries}
%\date{\today}

\author{Jeffrey Meier}
\address{Department of Mathematics, Indiana University, 
Bloomington, IN 47408}
\email{jlmeier@indiana.edu}
\urladdr{http://pages.iu.edu/~jlmeier/} 

\begin{abstract}
	We consider the question of when a slice knot admits a reducible Dehn surgery.  By analyzing the correction terms associated to such a surgery, we show that slice knots cannot admit surgeries with more than two summands. We also give a necessary Heegaard Floer theoretic condition for a positive cable of a knot to be slice.
\end{abstract}

\maketitle

%%%%%%%%%%%%%%%%%%%%%%%%%%%%%%%%%%%%%%%%%%%%%%%%%%%%%%%%
%%%%%%%%%%%%%%%%%%%%%%%%%%%%%%%%%%%%%%%%%%%%%%%%%%%%%%%%
\section{Introduction}
%%%%%%%%%%%%%%%%%%%%%%%%%%%%%%%%%%%%%%%%%%%%%%%%%%%%%%%%
%%%%%%%%%%%%%%%%%%%%%%%%%%%%%%%%%%%%%%%%%%%%%%%%%%%%%%%%

Dehn surgery is one of the simplest and most important operations in three-manifold topology, and understanding which three-manifolds can result from Dehn surgery on a knot in $S^3$ has been one of the major goals of modern low-dimensional topology.  We refer the reader to \cite{gordon:Dehn_survey} for an introduction and comprehensive overview of the history and scope of this endeavor.

Perhaps the most basic question related to this goal is when the result of Dehn surgery can be a reducible manifold (a manifold containing an essential two-sphere). Moser's classification of Dehn surgeries on torus knots \cite{moser:torus} gave the first nontrivial examples of this phenomenon:
$$S^3_{pq}(T_{p,q}) \cong L(p,q)\#L(q,p).$$
Of course, the fact that $S^3_0(U)\cong S^1\times S^2$ can be thought of as a degenerate case of this.  Gabai showed \cite{gabai:surgery} that no other knot in $S^3$ admits a surgery to $S^1\times S^2$, so we may assume that any nontrivial, reducible surgery decomposes as a connected sum.

Other interesting examples of reducible surgeries come from considering cabled knots \cite{gordon:satellite}.  Let $J_{p,q}$ denote the $(p,q)$--cable of $J$, for some pair $p,q\in\Z$ with $p\geq 2$. (Throughout, $p$ is the longitudinal winding number.)  Then we have
$$S^3_{pq}(J_{p,q})\cong L(p,q)\# S^3_{q/p}(J).$$
Cabled knots represent the only known examples of knots admitting reducible surgeries (thinking of $T_{p,q}$ as a cable of the unknot), and we have the following conjecture of Gonzales Acu\~na and Short.

\begin{CC}[\cite{GAS}]
	If $K$ is a nontrivial knot in $S^3$ and $S^3_r(K)$ is reducible, then $K=J_{p,q}$ for some knot $J$, and $r=pq$.
\end{CC}

There has been much progress made towards a positive resolution of this conjecture.  For example, it is known that any reducible surgery slope must be integral \cite{gordon-luecke:complements} and that one summand must be a lens space \cite{gordon-luecke:summand}. Moreover, if a reducible surgery on $K$ yields a connected sum of lens spaces, then $K$ is either a torus knot or a cable thereof \cite{greene:cabling}.  Many classes of knots are known to satisfy the the conjecture, including knots with symmetries \cite{EM:cabling,hayashi-motegi,hayashi-shimokawa}, alternating and arborescent knots \cite{men-thistle:reducible,wu:arborescent}, and knots with low bridge number \cite{grove,hoffman:great,howie:bridge,sayari, zufelt}.

Since any reducible surgery slope must be integral, it follows that every reducible surgery on a cable knot yields a connected sum of a lens space with an \emph{irreducible} rational homology sphere.  In particular, all such surgeries have only two summands.  This motivates the following conjecture.

\begin{TS}
	If $K$ is a nontrivial knot in $S^3$ and $S^3_r(K)$ is reducible, then $S^3_r(K)\cong Y_1\#Y_2$, with $Y_1$ and $Y_2$ irreducible.
\end{TS}

In a reducible surgery, at most one of the summands is an integer homology sphere and at most two of the summands are lens spaces \cite{valdez,howie:groups}.  It follows that a reducible surgery with three summands (the highest possible number) must consist of two lens spaces summands and an integer homology sphere summand.  The Two-Summands Conjecture is true for knots with bridge number at most five and positive braid closures \cite{zufelt}.

In the present paper, we restrict our attention to slice knots.  A knot $K\subset S^3$ is called \emph{slice} if there exists a smooth, properly embedded disk in $D\subset B^4$ with $\partial D=K$.  Our first result verifies the Two Summands Conjecture for slice knots.

\begin{theorem}\label{thm:twosummand}
	A slice knot in the three-sphere cannot admit a reducible surgery with three irreducible summands.
\end{theorem}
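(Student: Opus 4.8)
The plan is to suppose toward a contradiction that $K$ is slice and that $S^3_r(K)$ is reducible with three irreducible summands, and then to play the additivity of the Heegaard Floer correction terms under connected sum off against the surgery formula for a slice knot. First I would record the structure of such a surgery: since reducible surgery slopes are integral \cite{gordon-luecke:complements}, we may write $r=n\in\Z$; and by the discussion preceding the theorem (building on \cite{gordon-luecke:summand,valdez,howie:groups}), a three-summand reducible surgery must take the form
\[
S^3_n(K)\cong L(p,q)\,\#\,L(p',q')\,\#\,Z,
\]
where $L(p,q),L(p',q')$ are nontrivial lens spaces ($p,p'\geq 2$) and $Z$ is a nontrivial integer homology sphere. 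As $H_1(Z)=0$, the group $H_1(S^3_n(K))\cong\Z/|n|\Z$ has order $|n|=pp'$; being cyclic, this forces $\gcd(p,p')=1$.

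Next I would feed in sliceness. Since $g_4(K)=0$ we have $V_0(K)=\nu^+(K)=0$, and then $V_0(K)\geq V_1(K)\geq\cdots\geq 0$ forces $V_i(K)=0$ for all $i$; the same holds for the mirror $\overline K$, which is also slice. Inserting this into the integer surgery formula for correction terms (Ni--Wu, after Ozsv\'ath--Szab\'o) gives
\[
d\bigl(S^3_n(K),\mathfrak{s}\bigr)=d\bigl(S^3_n(U),\mathfrak{s}\bigr)\qquad\text{for every }\mathfrak{s}\in\Spinc(S^3_n(K)),
\]
where $S^3_n(U)$ is the corresponding unknot surgery, a lens space of order $|n|$ (for $n<0$ one applies the formula to $\overline K$ and reverses orientation).

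Finally I would compare the two descriptions using the \emph{spread} $\Delta(Y):=\max_{\mathfrak{s}}d(Y,\mathfrak{s})-\min_{\mathfrak{s}}d(Y,\mathfrak{s})$ of the correction terms of a rational homology sphere $Y$. Since $d$ is additive under connected sum and $Z$ carries a unique $\spinc$ structure, $\Delta$ is additive over connected sums and $\Delta(Z)=0$, so $\Delta(S^3_n(K))=\Delta(L(p,q))+\Delta(L(p',q'))$; on the other hand the closed formula for the correction terms of $S^3_n(U)$ gives $\Delta(S^3_n(U))>\tfrac{|n|-1}{4}=\tfrac{pp'-1}{4}$. The one remaining input I would need is the estimate $\Delta(L(m,s))\leq\tfrac{m}{4}$ for every lens space, which can be extracted from the Ozsv\'ath--Szab\'o recursion for lens-space correction terms (or cited from the literature on $d$-invariants of lens spaces); this is the genuinely technical step. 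Granting it, $\Delta(L(p,q))+\Delta(L(p',q'))\leq\tfrac{p+p'}{4}\leq\tfrac{pp'-1}{4}$, the last inequality because $p,p'\geq 2$ are coprime, hence not both $2$, so $(p-1)(p'-1)\geq 2$. Chaining the inequalities gives $\Delta(S^3_n(K))\leq\tfrac{pp'-1}{4}<\Delta(S^3_n(U))=\Delta(S^3_n(K))$, the contradiction. The hard part is therefore the lens-space spread bound $\Delta(L(m,s))\leq m/4$; with that available the rest is bookkeeping with inputs already in hand.
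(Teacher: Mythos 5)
Your route is the same as the paper's: use sliceness to identify $\mathcal{D}(S^3_n(K))$ with $\mathcal{D}(L(n,1))$ (the paper does this via the homology cobordism induced by the concordance, and remarks that the $V_0=0$ surgery-formula argument you give works equally well), then compare spreads of correction terms against the decomposition $L(p,q)\#L(p',q')\#Z$, using additivity of $d$ and the fact that the homology sphere $Z$ contributes a single constant, and close with the arithmetic that coprime $p,p'\geq 2$ force $p+p'\leq pp'-1$ while $\Delta(L(pp',1))>(pp'-1)/4$. All of that is correct, and your endgame is a slightly tidier packaging of the paper's case analysis ($pq\leq p+q$ or $pq-1\leq p+q$, hence $p=1$, $q=1$, or $p=q=2$, excluded by coprimality).

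The gap is exactly where you flag it, and it is a real one: the estimate $\Delta(L(m,s))\leq m/4$ for \emph{all} lens spaces (arbitrary $s$, not just $s=1$) is not an off-the-shelf fact to be cited; it is the technical heart of this theorem and is precisely the paper's Lemma \ref{lemma:range}. Filling it requires actual work with the recursion of Proposition \ref{prop:lensterms}: one first uses $L(p,q)\cong -L(p,p-q)$ to put $q$ in the range where the estimate can close, checks the finitely many cases with $q<8$ by hand, and then runs an induction in which the recursion gives $\Delta(p,q)\leq \frac{(p+q-1)^2}{4pq}+\Delta(q,r)\leq \frac{p}{q}+\Delta(q,r)$, and the inductive bound $\Delta(q,r)\leq q/4$ together with $q\geq 8$ and $q\leq p/2$ yields $\Delta(p,q)\leq p/8+p/8=p/4$. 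So your proposal correctly isolates the needed statement and the method (induction on the Ozsv\'ath--Szab\'o recursion), but as written it defers the only step of the proof that is not bookkeeping; everything else matches the paper and is sound.
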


The proof is straightforward.  If $K$ is a slice knot, then $S^3_{pq}(K)$ is integer homology cobordant to $L(pq,1)$.  Knowing this, one can compare the correction terms of $L(pq,1)$ to those of $L(p,a)\#L(q,b)$.  A simple lemma shows that these collections never match up.  In fact, Theorem \ref{thm:twosummand} holds for any knot $K$ with $V_0(K)=V_0(\overline K)=0$, where $V_0(K)$ is a Heegaard Floer theoretic knot invariant coming from the knot Floer complex that determines the correction terms of surgeries on $K$ \cite{oz-sz:integer,oz-sz:rational}, and $\overline K$ denotes the mirror of $K$.  The condition that $V_0(K)=0$ suffices in the case that the surgery is positive.  Along these same lines, we have the following observation.

\begin{theorem}\label{thm:cable}
	Suppose that $K$ is a positive cable of a knot $J$.  If $K$ is slice, then $V_0(J)=0$.
\end{theorem}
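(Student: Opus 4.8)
The plan is to derive the conclusion by computing the \emph{total} correction term of $S^3_{pq}(K)$ in two different ways. Write $K=J_{p,q}$ with $p\geq 2$ and, since the cable is positive, $q\geq 1$ (and $\gcd(p,q)=1$). Exactly as in the proof of Theorem~\ref{thm:twosummand}, if $K$ is slice then the trace of the $pq$--surgery on $K$ contains an embedded two-sphere of square $pq$, namely the slice disk capped off by the core of the two-handle; excising a tubular neighborhood of this sphere produces an integer homology cobordism between $L(pq,1)$ and $S^3_{pq}(K)$. Consequently the multisets of correction terms of these two manifolds agree, and in particular
$$\sum_{\s\in\Spinc(S^3_{pq}(K))} d(S^3_{pq}(K),\s)\;=\;\sum_{\s\in\Spinc(L(pq,1))} d(L(pq,1),\s)\;=:\;\Sigma_0,$$
the right-hand side being also the total correction term of $S^3_{pq}(U)=L(pq,1)$.

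Next I would evaluate the left-hand side using the cabling formula $S^3_{pq}(K)\cong L(p,q)\#S^3_{q/p}(J)$. Since correction terms add under connected sum and $|\Spinc(L(p,q))|=p$, $|\Spinc(S^3_{q/p}(J))|=q$, the total correction term of $S^3_{pq}(K)$ is $q\sum_{\s}d(L(p,q),\s)+p\sum_{\s}d(S^3_{q/p}(J),\s)$. Because the cable is positive, the surgery coefficient $q/p$ is positive, so the Ozsv\'ath--Szab\'o rational surgery formula applies: for an appropriate labelling it gives $d(S^3_{q/p}(J),\s_i)=d(S^3_{q/p}(U),\s_i)-2n_i$, where each $n_i$ is a nonnegative integer of the form $V_j(J)$ with $j\geq 0$, and where the index--zero spin$^c$ structure contributes $n_0=V_0(J)$ (since $V_{\bullet}(J)$ is nonincreasing, so $V_0(J)$ is its maximum). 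Summing and using $S^3_{q/p}(U)=L(q,p)$ together with $L(p,q)\#L(q,p)\cong S^3_{pq}(T_{p,q})$, the terms $q\sum_{\s}d(L(p,q),\s)+p\sum_{\s}d(S^3_{q/p}(U),\s)$ reassemble into the total correction term of $S^3_{pq}(T_{p,q})$, so that $\Sigma_0=\sum_{\s}d(S^3_{pq}(T_{p,q}),\s)-2p\sum_i n_i$.

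To finish, apply the integer surgery formula to the torus knot $T_{p,q}$ itself: for every spin$^c$ structure it shows that $d(S^3_{pq}(T_{p,q}),\s)$ is obtained from $d(S^3_{pq}(U),\s)$ by subtracting a nonnegative even integer, hence $\sum_{\s}d(S^3_{pq}(T_{p,q}),\s)\leq\Sigma_0$. Combining with the identity above gives $2p\sum_i n_i\leq 0$; as every $n_i\geq 0$ this forces $\sum_i n_i=0$, and in particular $V_0(J)=n_0=0$, as claimed.

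The steps needing the most care are the bookkeeping ones: verifying that the cobordism coming from the slice disk is genuinely an \emph{integer} homology cobordism (so the two multisets of correction terms coincide), and matching the spin$^c$ labelings across the cabling decomposition, the connected-sum formula, and the surgery formula. Passing to the sum over all spin$^c$ structures is precisely what neutralizes the labeling issues, collapsing the argument to the elementary termwise inequality $d(S^3_{pq}(T_{p,q}),\s)\leq d(S^3_{pq}(U),\s)$. The one substantive input beyond Theorem~\ref{thm:twosummand} is the rational surgery formula, and before finalizing I would double-check its precise form for a positive coefficient $q/p$ --- in particular that only the nonnegative invariants $V_j(J)$ enter (no $H_j$ or mirror terms), and that $V_0(J)$ genuinely occurs among the $n_i$ --- since that is exactly what the conclusion isolates.
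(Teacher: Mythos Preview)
Your argument is correct and takes a genuinely different route from the paper. The paper first invokes an Alexander--polynomial lemma (Proposition~\ref{prop:alex}) showing that an algebraically slice cable $J_{p,q}$ must have $q=1$; once $q=1$, the cabling identity reads $S^3_p(K)=L(p,1)\#S^3_{1/p}(J)$, and comparing the multisets $\mathcal D$ immediately forces $d(S^3_{1/p}(J))=0$, hence $V_0(J)=0$. You never reduce to $q=1$: instead you sum the correction terms over all $\spinc$ structures, reassemble the lens--space contributions into the total for $S^3_{pq}(T_{p,q})=L(p,q)\#L(q,p)$, and then use the termwise inequality $d(S^3_{pq}(T_{p,q}),\s)\le d(L(pq,1),\s)$ from the integer surgery formula to squeeze out $\sum_i n_i=0$. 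Your approach stays entirely inside Heegaard Floer theory and elegantly neutralizes the $\spinc$--labeling bookkeeping by passing to sums; the paper's approach trades the rational surgery formula and the torus--knot comparison for a short classical argument about roots of $\Delta_{T_{p,q}}$, after which the Floer input is minimal. (Interestingly, the paper remarks that its original proof was ``entirely Heegaard Floer theoretic'' before the Alexander--polynomial simplification was suggested, so you have likely rediscovered something close to that earlier argument.) One small correction to your write--up: in the Ni--Wu form of the rational surgery formula the shifts are $n_i=\max\{V_{j_i}(J),H_{j_i}(J)\}$ rather than bare $V_{j_i}$'s; but since each of these is nonnegative and $V_0(J)=H_0(J)$ gives $n_0=V_0(J)$, your inequality chain and conclusion are unaffected.
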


As an application, let $D$ denote the untwisted Whitehead double of the right-handed trefoil knot. By Proposition 6.1 of \cite{HKL}, we know that $V_0(D)=1$. It follows that no positive cable of $D$ is smoothly slice, even though the $(p,1)$--cable of $D$ is topologically slice for all $p$.  (Note that this particular fact can also be deduced from work of Hom \cite{hom:cable}.) We're led to the following natural conjecture.

\begin{conjecture}\label{conj:cabled}
	Suppose that $K$ is the $(p,q)$--cable of $J$.  Then $K$ is slice if and only if $J$ is slice and $q=1$.
\end{conjecture}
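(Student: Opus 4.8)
The plan is to treat the two implications separately, since they call for entirely different techniques. The reverse implication admits an explicit construction, while the forward implication splits into determining $q$ (accessible by the methods of this paper) and proving that $J$ is slice (the genuine obstacle).

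For the reverse implication, suppose $J$ is slice and $q=1$; I would build an explicit slice disk for $J_{p,1}$. Begin with a slice disk $D\subset B^4$ for $J$ and take $p$ parallel copies $D_1,\dots,D_p$ with respect to the normal framing of $D$. Since a slice disk induces the $0$--framing on its boundary, the boundaries $\partial D_1,\dots,\partial D_p$ form the $0$--framed $p$--cable link of $J$, that is, $p$ parallel longitudinal copies of $J$. Because the pattern $T_{p,1}$ is unknotted, these $p$ components can be joined into the single knot $J_{p,1}$ by attaching $p-1$ disjoint, embedded, untwisted bands realized in $S^3$ and pushed slightly into $B^4$. The resulting surface $\Sigma=D_1\cup\dots\cup D_p\cup(\text{bands})$ satisfies $\chi(\Sigma)=p-(p-1)=1$; being connected, orientable, and bounded by the single knot $J_{p,1}$, it is a disk, so $J_{p,1}$ is slice. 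I expect this direction to be routine, the only delicate point being that the bands realizing the $(p,1)$--pattern can be taken untwisted, which is exactly the statement that $T_{p,1}$ is the unknot. (The identical construction applies to the unknotted pattern $T_{p,-1}$, showing that $J_{p,-1}$ is slice as well; this reflects the mirror relation $\overline{J_{p,q}}=\overline{J}_{p,-q}$ and indicates that, absent a positivity hypothesis on the cable, one can hope to pin down $q$ only up to sign.)

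For the forward implication, assume $K=J_{p,q}$ is slice. The first step is to determine $q$, where I would combine a classical signature obstruction with the Heegaard Floer input of this paper. Litherland's satellite formula gives $\sigma_{J_{p,q}}(\omega)=\sigma_{T_{p,q}}(\omega)+\sigma_J(\omega^p)$ for the Tristram--Levine signatures, and the torus--knot signatures $\sigma_{T_{p,q}}$ are nonzero for $|q|\geq 2$ when $p\geq 2$. Combining this with the vanishing of the slice obstructions of $K$, together with Hom's cabling formula for $\tau$ and the correction--term comparison underlying Theorem \ref{thm:cable} (which already yields $V_0(J)=0$ in the positive case), I would aim to force $|q|=1$, with the positive--cable convention then selecting $q=1$. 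The delicate point is that the signature identity entangles the unknown contribution $\sigma_J$ with $\sigma_{T_{p,q}}$, so $q$ cannot be decoupled from $J$ by signatures alone and the argument must be supplemented by the Floer--theoretic constraints.

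The second and decisive step is to prove that $J$ itself is slice, and this is where I expect the genuine obstacle to lie. Using the cabling formulas for $\tau$, for $V_0$ (via Theorem \ref{thm:cable}), for the Tristram--Levine signatures, and for the Casson--Gordon invariants, one can show that every presently computable concordance invariant of $J$ vanishes. However, passing from ``all known invariants of $J$ vanish'' to ``$J$ bounds a smooth disk in $B^4$'' is precisely the inference that no current technique supplies, since smooth sliceness is not characterized by any finite collection of invariants. For this reason I regard Conjecture \ref{conj:cabled} as genuinely open, with the ``$J$ slice'' half of the forward implication as its essential difficulty; the results of the present paper establish exactly the obstruction--theoretic shadow of that implication.
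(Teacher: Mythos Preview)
You correctly recognize that this is a conjecture, not a theorem, and that the assertion ``$J$ is slice'' in the forward implication is the genuine open content; the paper agrees and does not attempt to prove it. Your band-and-parallel-copies construction for the reverse implication is standard and matches what the paper takes for granted.

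The one substantive difference is in how to pin down $q$. You propose Litherland's signature formula, and you yourself flag its weakness: the unknown term $\sigma_J(\omega^p)$ is entangled with $\sigma_{T_{p,q}}(\omega)$, so signatures alone cannot isolate $q$, and you defer to unspecified Floer-theoretic supplementation whose sufficiency you do not verify. The paper instead gives a short, self-contained, purely classical argument (Proposition~\ref{prop:alex}) using only the Fox--Milnor condition $\Delta_K(t)=f(t)f(t^{-1})$ together with the cabling factorization $\Delta_K(t)=\Delta_J(t^p)\Delta_{T_{p,q}}(t)$: any root $\xi$ of $\Delta_{T_{p,q}}$ satisfies $\overline\xi=\xi^{-1}$, so it occurs with multiplicity at least two in $f(t)f(t^{-1})$ and hence must also be a root of $\Delta_J(t^p)$; taking $\xi=e^{2\pi i/pq_1}$ for a prime divisor $q_1$ of $q$ forces $\Phi_{q_1}(t)\mid\Delta_J(t)$, contradicting $\Delta_J(1)=1$ unless $q_1=1$. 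This argument is cleaner, needs no Heegaard Floer input, and works already at the level of algebraic sliceness, so it both strengthens and simplifies precisely the step where your outline is vaguest.
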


Proposition \ref{prop:alex} below shows that if $J_{p,q}$ is algebraically slice, then $q=1$. Since any $(p,1)$--cable of slice knot is also slice, one direction of the conjecture is true.  Theorem \ref{thm:cable} and \cite{hom:cable} give evidence that the conjecture is true at the level of Heegaard Floer homology. Note that Conjecture \ref{conj:cabled} is true for fibered knots in the homotopy-ribbon setting by Theorem 8.5 of \cite{miyazaki:fibered}.
 
%%%%%%%%%%%%%%%%%%%%%%%%%%%%%%%%%%%%%%%%%%%%%%%%%%%%%%%
\section*{Acknowledgements}
%%%%%%%%%%%%%%%%%%%%%%%%%%%%%%%%%%%%%%%%%%%%%%%%%%%%%%%

The author wishes to thank Nicholas Zufelt, whose exposition of the Cabling Conjecture during a talk at Indiana University inspired this note and whose comments improved the manuscript.  The author is also grateful to Tye Lidman, Charles Livingston, and Margaret Doig, whose insights helped to improve the paper to its present form.  This work was supported by the National Science Foundation under grant DMS-1400543.

%%%%%%%%%%%%%%%%%%%%%%%%%%%%%%%%%%%%%%%%%%%%%%%%%%%%%%%%
%%%%%%%%%%%%%%%%%%%%%%%%%%%%%%%%%%%%%%%%%%%%%%%%%%%%%%%%
\section{Heegaard Floer correction terms}
%%%%%%%%%%%%%%%%%%%%%%%%%%%%%%%%%%%%%%%%%%%%%%%%%%%%%%%%
%%%%%%%%%%%%%%%%%%%%%%%%%%%%%%%%%%%%%%%%%%%%%%%%%%%%%%%%

The main tool used in proving Theorem \ref{thm:twosummand} is the correction terms coming from Heegaard Floer homology.  This theory was first formulated for closed three-manifolds \cite{oz-sz:3-manifolds_1}, before being shown to give invariants of four-manifolds \cite{oz-sz:absolute}.  For our purposes, the most important aspects of the theory will be the Heegaard Floer correction terms, defined in \cite{oz-sz:absolute}.

Let $Y$ be a oriented, closed three-manifold, and let $\Spinc(Y)$ denote the collection of $\Spinc$ structures associated to $Y$.  For each $\frak s\in\Spinc(Y)$, let $d(Y,\frak s)$ denote the \emph{correction term} associated to $(Y,\frak s)$.  A detailed development of this invariant can be found in \cite{oz-sz:absolute}, where it was shown to have the following properties.
\be
	\item Let $-Y$ denote the opposite orientation of $Y$. Then  $d(-Y,\frak s)=-d(Y,\frak s)$.
	\item Let $\overline{\frak s}$ be the image of $\frak s$ under conjugation.  Then, $d(Y,\overline{\frak s}) = d(Y,\frak s)$.
	\item For any pairs $(Y_1,\frak s_1)$ and $(Y_2,\frak s_2)$,
	$$d(Y_1\#Y_2,\frak s_1\#\frak s_2) = d(Y_1,\frak s_1)+d(Y_2,\frak s_2).$$
\ee

%One of the reasons that the correction terms have proved so useful in the study of knot concordance is that they are rational homology cobordism invariants. 

%\begin{theorem}\cite{oz-sz:absolute}\label{thm:bordism}
%	Suppose that $(Y,\frak s)=\partial (W,\frak t)$, where $W$ is a rational homology four-ball with $\frak t\in\Spinc(W)$ and $\frak t\vert_Y=\frak s$.  Then $d(Y,\frak s)=0$.
%\end{theorem}

Let $\D(Y)$ denote the collection of correction terms associated to $Y$. (Note that elements of this set can appear with multiplicity greater than one.) Recall that there is an affine identification $\Spinc(Y)\approx H^2(Y;\Z)$.  For our purposes, the most important aspect of the correction terms is that they are preserved under integer homology cobordism.

\begin{proposition}\label{prop:cobordant}
	If $Y_1$ and $Y_2$ are integer homology cobordant, then $\mathcal D(Y_1)=\mathcal D(Y_2)$.
\end{proposition}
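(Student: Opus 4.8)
The plan is to turn an integer homology cobordism $W$ from $Y_1$ to $Y_2$ into an explicit correction-term-preserving bijection $\varphi\colon\Spinc(Y_1)\to\Spinc(Y_2)$; once such a $\varphi$ is in hand, the equality $\mathcal D(Y_1)=\mathcal D(Y_2)$ of multisets is immediate. The one substantive input is the Ozsv\'ath--Szab\'o inequality for negative-definite cobordisms \cite{oz-sz:absolute}, which I will apply to $W$ and then to $W$ with its orientation and ends reversed.

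First I would record the homological bookkeeping. It suffices to treat the case in which $Y_1$ and $Y_2$ are rational homology spheres, which is the one needed for Theorem \ref{thm:twosummand}; in general one restricts attention to torsion $\Spinc$ structures and uses the appropriate refinement of \cite{oz-sz:absolute}. A routine argument with the long exact sequence of the pair $(W,\partial W)$ and Poincar\'e--Lefschetz duality shows $b_1(W)=b_2(W)=0$ and that the restriction maps $H^2(W;\Z)\to H^2(Y_i;\Z)$ are isomorphisms for $i=1,2$. Since $\Spinc$ structures on a space form a torsor over its second integral cohomology, it follows that restriction induces bijections $r_i\colon\Spinc(W)\to\Spinc(Y_i)$, and I set $\varphi=r_2\circ r_1^{-1}$.

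Next I would apply the inequality of \cite{oz-sz:absolute}: for any negative-definite cobordism $W$ between rational homology spheres with $b_1(W)=0$ and any $\frak t\in\Spinc(W)$,
$$c_1(\frak t)^2 + b_2^-(W)\ \le\ 4\,d(Y_2,\frak t|_{Y_2}) - 4\,d(Y_1,\frak t|_{Y_1}).$$
Because $b_2(W)=0$, our $W$ is trivially negative-definite, and both $c_1(\frak t)^2$ and $b_2^-(W)$ vanish, so $d(Y_1,\frak t|_{Y_1})\le d(Y_2,\frak t|_{Y_2})$. Reversing the orientation of $W$ and interchanging its two ends yields an integer homology cobordism from $Y_2$ to $Y_1$; applying the same inequality to it, and using the symmetry $d(Y,\overline{\frak s})=d(Y,\frak s)$ to rewrite the boundary restrictions in terms of the original $\Spinc$ structures, gives $d(Y_2,\frak t|_{Y_2})\le d(Y_1,\frak t|_{Y_1})$. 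Hence $d(Y_1,\frak t|_{Y_1})=d(Y_2,\frak t|_{Y_2})$ for every $\frak t\in\Spinc(W)$, that is, $d(Y_1,\frak s)=d(Y_2,\varphi(\frak s))$ for all $\frak s\in\Spinc(Y_1)$. As $\varphi$ is a bijection, $\mathcal D(Y_1)=\mathcal D(Y_2)$.

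This proposition is really a matter of packaging known results, so I do not expect a genuine obstacle. The only points needing a little care are checking that reversing a homology cobordism again produces a homology cobordism (so that the inequality applies in both directions) and tracking how the restriction of a $\Spinc$ structure behaves under that reversal, both of which are immediate from the elementary properties of the correction terms listed above.
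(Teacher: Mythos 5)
Your argument is correct, and it is the standard proof of this fact: the paper itself gives no proof, deferring to \cite{doig-wehrli}, and the argument there (and in the folklore) is exactly yours — an integer homology cobordism has $b_1=b_2=0$, restriction identifies $\Spinc(W)$ with $\Spinc(Y_i)$, and the Ozsv\'ath--Szab\'o negative-definite cobordism inequality from \cite{oz-sz:absolute}, applied to $W$ and to the reversed cobordism, pins down $d(Y_1,\mathfrak s)=d(Y_2,\varphi(\mathfrak s))$. Your restriction to rational homology spheres matches the setting actually used in the paper, so there is nothing to fix.
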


The correspondence between the collections of correction terms in the above proposition can be strengthened; see \cite{doig-wehrli} for a proof and more detail.  Here, we will not be too concerned with fixing identifications and labelings of $\spinc$ structures.

Let $L(p,q)$ denote the lens space obtained by $p/q$--surgery on the unknot in $S^3$.  In this case, Ozv\'ath and Szab\'o \cite{oz-sz:absolute} gave a canonical ordering on $\Spinc(L(p,q))$ by elements $i\in\Z_p$, as well as a recursive formula for the correction terms.

\begin{proposition}\label{prop:lensterms}
	For any positive, relatively prime integers $p>q$ and any integer $0\leq i<p+q$, we have
\begin{equation}\label{eqn:lens}
d(-L(p,q),i)=\frac{pq-(2i+1-p-q)^2}{4pq}-d(-L(q,r),j),
\end{equation}
	where $r$ and $j$ are the reductions modulo $q$ of $p$ and $i$, respectively.
\end{proposition}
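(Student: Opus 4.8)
The plan is to realize the recursion geometrically. The passage $(p,q)\mapsto(q,r)$ with $r\equiv p\pmod q$ is precisely one step of the (negative) continued-fraction expansion of $p/q$, and such a step is realized by attaching a single two-handle; reading off the grading-shift formula for the resulting cobordism map on $HF^+$ will produce the stated identity, once one knows that lens spaces have the simplest possible Heegaard Floer homology. Concretely, write $p/q=[a_1,\ldots,a_n]^-$ as a negative continued fraction with each $a_i\geq 2$, so that (for the orientation convention implicit in the statement) $-L(p,q)$ bounds the linear plumbing $X_\Gamma$ on the weighted chain $(-a_1,\ldots,-a_n)$. Deleting the terminal vertex of the chain leaves a plumbing $X_{\Gamma'}$ whose boundary is $\pm L(q,r)$; the continued-fraction identity $a_n-\tfrac{q'}{p'}=\tfrac{a_np'-q'}{p'}$ together with $a_n=\lceil p/q\rceil$ shows that the reduced pair is exactly $(q,r)$ with $r\equiv p\pmod q$. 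Since $X_\Gamma$ is obtained from $X_{\Gamma'}$ by a single two-handle attachment, there is a cobordism $W$ between the two lens spaces with $b_2(W)=1$ and negative-definite intersection form. (This is the same move as a single slam-dunk / Rolfsen twist applied to the surgery diagram of $L(p,q)$.)

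Next I would apply the Ozsv\'ath--Szab\'o cobordism map $F_{W,\mathfrak{s}}\colon HF^+\to HF^+$ for each $\mathfrak{s}\in\Spinc(W)$. Since $b_2^+(W)=0$, $\chi(W)=1$, and $\sigma(W)=-1$, this map shifts the absolute grading by $\tfrac14\bigl(c_1(\mathfrak{s})^2+1\bigr)$. Because lens spaces are $L$-spaces (each $HF^+$ is a single tower $\mathcal{T}^+$ in each $\Spinc$ structure), the surgery exact triangle for the unknot forces $F_{W,\mathfrak{s}}$ to be surjective in the relevant $\Spinc$ structure; hence it carries the bottom of the source tower to the bottom of the target tower, and comparing bottom gradings converts the grading-shift formula into
\[
d(-L(p,q),i)=d(-L(q,r),j)+\tfrac14\bigl(c_1(\mathfrak{s})^2+1\bigr),
\]
for the $\mathfrak{s}$ restricting to the $\Spinc$ structures labelled $i$ and $j$. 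It then remains to evaluate $c_1(\mathfrak{s})^2$ --- a relative self-intersection number in $W$, hence a rational number with denominator dividing $pq$ --- and to check that, for the correct matching of labels, $c_1(\mathfrak{s})^2=-\tfrac{(2i+1-p-q)^2}{pq}$, which is equivalent to the asserted formula.

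The main obstacle is the $\Spinc$ bookkeeping, not the Floer theory. One must (a) pin down the orientation conventions so that $W$ genuinely connects $-L(p,q)$ to $-L(q,r)$ rather than to a mirror, since this controls the sign with which $d(-L(q,r),j)$ enters; (b) track how the canonical $\Z_p$-labelling of $\Spinc(L(p,q))$ from \cite{oz-sz:absolute} restricts through $W$ to the canonical $\Z_q$-labelling of $\Spinc(L(q,r))$, so that the label $i$ descends to $j\equiv i\pmod q$; and (c) carry out the relative $c_1^2$ computation and see the quadratic term $(2i+1-p-q)^2$ emerge. Steps (a) and (b) are the genuinely delicate points; step (c) is a finite computation with the continued-fraction data (or an induction on $n$), and everything else is formal given the grading-shift formula and the computation of $HF^+(L(p,q))$. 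An alternative route that sidesteps (c) is a direct induction on $q$ using the rational surgery exact triangle for the unknot, but it meets the same labelling difficulties in (a)--(b).
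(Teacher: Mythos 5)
First, note that the paper does not prove this proposition at all: it is quoted from Ozsv\'ath--Szab\'o \cite{oz-sz:absolute} (where it appears with the canonical labelling of $\Spinc(L(p,q))$ by $i\in\Z_p$). So the relevant comparison is with the original argument there, and your outline is in the same spirit as it: a two-handle cobordism between lens spaces coming from the continued-fraction/plumbing picture, the grading-shift formula, and the fact that lens spaces have $HF^+=\mathcal{T}^+$ in each $\Spinc$ structure. As a proof, however, the sketch has concrete gaps beyond the admitted bookkeeping. (1) \emph{The sign.} Your displayed conclusion reads $d(-L(p,q),i)=d(-L(q,r),j)+\tfrac14(c_1(\mathfrak s)^2+1)$, with a \emph{plus} sign on the recursive term, and you assert this is ``equivalent to the asserted formula'' once $c_1^2=-(2i+1-p-q)^2/pq$; it is not --- the proposition has $-d(-L(q,r),j)$. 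The geometry you set up is exactly where the minus sign comes from: deleting the vertex of weight $\lceil p/q\rceil$ from the chain for $p/q=[a_1,\dots,a_n]^-$ leaves a plumbing bounding $-L(q,\,q-r)\cong +L(q,r)$, not $-L(q,r)$, so the smaller lens space enters with reversed orientation and hence with negated correction terms (after conjugating labels, using $d(Y,\overline{\mathfrak s})=d(Y,\mathfrak s)$). You flag this as ``step (a),'' but as written the intermediate identity is wrong, and the orientation reversal is not an optional convention --- it is forced by the construction. (Also, $\lceil p/q\rceil$ is $a_1$, not $a_n$, in the usual indexing; harmless, but symptomatic of the unchecked bookkeeping.)

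(2) \emph{``Surjective, hence bottom to bottom'' is not a valid inference.} A $U$-equivariant, grading-homogeneous surjection $\mathcal{T}^+\to\mathcal{T}^+$ need not take the bottom class to the bottom class: the map sending the degree-$(d+2k)$ generator to the degree-$(d+2k)$ generator for $k\geq 1$ and killing the bottom element is $U$-equivariant and surjective. So surjectivity of $F^+_{W,\mathfrak s}$ (which does follow from $b_2^+(W)=0$, since $F^\infty$ is an isomorphism) only yields the inequality $d(\text{target})\geq d(\text{source})+\text{shift}$. To get equality --- which is the whole content of the formula --- you need an additional input, e.g.\ exactness in a surgery triangle whose third term vanishes in the relevant $\Spinc$ sector and degrees (forcing injectivity at the bottom), or a second inequality from a complementary cobordism; this is precisely the step your appeal to ``the surgery exact triangle for the unknot'' gestures at but does not carry out. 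Finally, items (b) and (c) --- matching the $\Z_p$- and $\Z_q$-labellings through $W$ so that $j\equiv i\pmod q$, and evaluating the rational square $c_1(\mathfrak s)^2$ to get $-(2i+1-p-q)^2/pq$ --- are deferred entirely, yet they are exactly what produces the stated formula rather than some recursion of the same general shape. So the strategy is the right one (and is essentially the one in \cite{oz-sz:absolute}), but the proposal does not yet constitute a proof.
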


This formula allows for the calculation the correction terms of any lens space, since $L(1,0)\cong S^3$, and $d(S^3)=0$. 

Now, if $K$ is slice, then there exists a concordance from $K$ to the unknot $U$.  It follows that $S^3_{p/q}(K)$ is integer homology cobordant to $S^3_{p/q}(U)\cong L(p,q)$.  (The Dehn surgery manifolds cobound a four-manifold that is obtained by performing ``Dehn surgery cross $I$'' on the concordance.  Thus, we obtain the following corollary to Proposition \ref{prop:cobordant}.

\begin{corollary}\label{coro:slice}
	If $K$ is slice, then 
	$$\mathcal D(S^3_{p/q}(K))=\mathcal D(L(p,q)).$$
\end{corollary}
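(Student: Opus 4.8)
The plan is to prove Corollary~\ref{coro:slice} directly from Proposition~\ref{prop:cobordant} by producing an explicit integer homology cobordism between $S^3_{p/q}(K)$ and $S^3_{p/q}(U)\cong L(p,q)$. First I would set up the trace of a concordance: since $K$ is slice, there is a smoothly embedded concordance cylinder $C\cong S^1\times I$ in $S^3\times I$ with $\partial C = K\times\{0\} \sqcup U\times\{1\}$. I would then remove an open tubular neighborhood $\nu(C)$ of $C$ from $S^3\times I$ and call the result $W$, so that $\partial W$ consists of the two knot exteriors $(S^3\setminus\nu K)\times\{0\}$ and $(S^3\setminus\nu U)\times\{1\}$, together with the ``side'' piece $\partial\nu(C) \cong S^1\times S^1\times I$ (a copy of $T^2\times I$).

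Next I would fill in $W$ along the $T^2\times I$ boundary component to realize the surgery. The slope $p/q$ gives a curve $\gamma$ on the boundary torus $\partial\nu K$, and since $C$ is a product cobordism from $K$ to $U$, this slope is carried consistently across $\partial\nu(C)\cong T^2\times I$ to the corresponding slope on $\partial\nu U$. Gluing in $(D^2\times S^1)\times I$ along $\partial\nu(C)$ so that $\partial D^2\times\{pt\}$ is identified with $\gamma$ on each level yields a cobordism $X$ from $S^3_{p/q}(K)$ to $S^3_{p/q}(U)$. I would then verify that $X$ is an integer homology cobordism: a Mayer--Vietoris computation, using that $S^3\times I$ has the homology of a point and that the solid torus $D^2\times S^1$ deformation retracts appropriately, shows $H_*(X, S^3_{p/q}(K);\Z) = 0$ (and symmetrically for the other end), exactly as in the standard ``surgery cross $I$'' argument; the key input is that $C$ is a product so the inclusion of each boundary torus into the side piece is a homotopy equivalence.

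Finally, having established that $S^3_{p/q}(K)$ and $L(p,q)$ are integer homology cobordant via $X$, I would simply invoke Proposition~\ref{prop:cobordant} to conclude $\mathcal D(S^3_{p/q}(K)) = \mathcal D(S^3_{p/q}(U)) = \mathcal D(L(p,q))$, which is the claim.

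The main obstacle — really the only subtle point — is the homology computation verifying that $X$ is an \emph{integer} homology cobordism rather than merely a rational one; one must check there is no torsion obstruction, which comes down to tracking $H_1$ of the glued-up manifold and confirming the meridian of the surgery solid torus generates the same homology on both ends because the cobordism is built on a product concordance. Everything else is a routine application of the already-cited machinery, so I would keep the exposition brief and point the reader to the standard references for the ``Dehn surgery cross $I$'' construction.
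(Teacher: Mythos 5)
Your proposal is correct and follows exactly the paper's route: the paper also obtains the corollary by performing ``Dehn surgery cross $I$'' on a concordance from $K$ to the unknot to produce an integer homology cobordism from $S^3_{p/q}(K)$ to $L(p,q)$, then invokes Proposition~\ref{prop:cobordant}. You have simply spelled out the Mayer--Vietoris verification that the paper leaves as a standard remark.
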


In fact, the above corollary holds for any $K$ with $V_0(K)=0$.  This follows from the integer and rational surgery formulae developed in \cite{oz-sz:integer,oz-sz:rational}.

%%%%%%%%%%%%%%%%%%%%%%%%%%%%%%%%%%%%%%%%%%%%%%%%%%%%%%%%
%%%%%%%%%%%%%%%%%%%%%%%%%%%%%%%%%%%%%%%%%%%%%%%%%%%%%%%%
\section{Proof of Theorem \ref{thm:twosummand}}\label{sec:proof}
%%%%%%%%%%%%%%%%%%%%%%%%%%%%%%%%%%%%%%%%%%%%%%%%%%%%%%%%
%%%%%%%%%%%%%%%%%%%%%%%%%%%%%%%%%%%%%%%%%%%%%%%%%%%%%%%%

In this section, we prove the main theorem. First, we present a simple lemma that gives an upper bound on the range of the correction terms for a given lens space.  Let $\Delta(p,q)=\max\D(L(p,q))-\min\D(L(p,q))$ denote this range.

\begin{lemma}\label{lemma:range}
	$\Delta(p,q)\leq p/4$
\end{lemma}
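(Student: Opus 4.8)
The plan is to deduce the lemma from a sharper, one-sided estimate, namely
$$\max\D(-L(p,q)) \le \frac{q}{4} \qquad \text{for all coprime } p>q\ge 1 .$$
This suffices: since $d(-Y,\s)=-d(Y,\s)$, the set $\D(L(p,q))$ is the negative of $\D(-L(p,q))$, so $\Delta(p,q)=\max\D(-L(p,q))-\min\D(-L(p,q))$. Recalling that $-L(p,q)$ is diffeomorphic to $L(p,p-q)$, we get $\D(-L(p,q))=\D(L(p,p-q))=-\D(-L(p,p-q))$, hence $\min\D(-L(p,q))=-\max\D(-L(p,p-q))\ge -\tfrac{p-q}{4}$ by the one-sided estimate applied to the pair $(p,p-q)$. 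Combining the two bounds gives $\Delta(p,q)\le \tfrac q4+\tfrac{p-q}{4}=\tfrac p4$.

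I would prove the one-sided estimate by strong induction on $p$. When $q=1$, Proposition \ref{prop:lensterms} together with $L(1,0)=S^3$ gives the closed form $d(-L(p,1),i)=\tfrac14-\tfrac{(2i-p)^2}{4p}$, which is at most $\tfrac14=\tfrac q4$. For the inductive step with $q\ge 2$, set $r=p\bmod q$; coprimality forces $1\le r\le q-1$. Writing $j=i\bmod q$, Proposition \ref{prop:lensterms} reads
$$d(-L(p,q),i)=\frac14-\frac{(2i+1-p-q)^2}{4pq}-d(-L(q,r),j) .$$
The subtracted quadratic term is nonnegative, and $j$ always lies in $\{0,\dots,q-1\}$, which is a sub-range of the indices valid for $-L(q,r)$, so $d(-L(q,r),j)\ge \min\D(-L(q,r))$. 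Hence $d(-L(p,q),i)\le \tfrac14-\min\D(-L(q,r))$. Now $-L(q,r)\cong L(q,q-r)$, so $\min\D(-L(q,r))=-\max\D(-L(q,q-r))$, and since $q<p$ the inductive hypothesis applies to $(q,q-r)$ (again a coprime pair with first entry larger than second), giving $\max\D(-L(q,q-r))\le\tfrac{q-r}{4}\le\tfrac{q-1}{4}$. Therefore $d(-L(p,q),i)\le \tfrac14+\tfrac{q-1}{4}=\tfrac q4$, and the induction closes.

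The step I expect to be the real obstacle is seeing that one must \emph{not} try to induct on $\Delta$ directly through Proposition \ref{prop:lensterms}: the range of the quadratic term $\tfrac{(2i+1-p-q)^2}{4pq}$ approaches $1$ as $q\to p$, so the triangle inequality only yields $\Delta(p,q)\le \tfrac{(p+q-1)^2}{4pq}+\Delta(q,r)$, which overshoots $\tfrac p4$ by an additive amount of order $1$ whenever $q$ is close to $p$. Tracking $\max\D$ and $\min\D$ separately, and using the diffeomorphism $-L(p,q)\cong L(p,p-q)$ to trade the lower bound on correction terms for an upper bound on those of a companion lens space, is precisely what removes this slack. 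The remaining points are routine bookkeeping: that $r$ and $q-r$ stay coprime to $q$ and in the right ranges along the Euclidean algorithm, and that the reductions $i\bmod q$ really index $\spinc$ structures of $-L(q,r)$.
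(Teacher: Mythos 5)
Your argument is correct, and it takes a genuinely different route from the paper. The paper inducts on the range $\Delta$ itself: using $\Delta(p,q)=\Delta(p,p-q)$ it reduces to the case $q\le p/2$, checks $q<8$ by a finite case analysis, and for $q\ge 8$ applies the crude estimate $\Delta(p,q)\le\frac{(p+q-1)^2}{4pq}+\Delta(q,r)\le\frac{p}{q}+\Delta(q,r)$, which closes precisely because $q\ge 8$ and $q\le p/2$ — this is the slack you correctly identify as fatal when $q$ is close to $p$, though the paper sidesteps it by the WLOG reduction rather than abandoning the direct induction. You instead prove the sharper one-sided bound $\max\D(-L(p,q))\le q/4$ (equivalently $\min\D(L(p,q))\ge -q/4$) by induction along the Euclidean algorithm, with only the $q=1$ closed form as base case, and then obtain the lemma by combining the estimates for the pairs $(p,q)$ and $(p,p-q)$ via $-L(p,q)\cong L(p,p-q)$. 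I checked the inductive step: the quadratic term is at most $1/4$, the reduction $j=i\bmod q$ does index a $\spinc$ structure of $-L(q,r)$, coprimality keeps $1\le r\le q-1$ so that $(q,q-r)$ is an admissible pair with $q<p$, and the resulting bound $\frac14+\frac{q-r}{4}\le\frac q4$ closes the induction; the bound is even tight (e.g.\ for $-L(p,p-1)\cong L(p,1)$). What each approach buys: the paper's is shorter to state but hides a hand computation for all $q<8$ and a symmetry reduction; yours eliminates both, is uniform in $q$, and yields the stronger intermediate statement that each of $\max\D(L(p,q))$ and $-\min\D(L(p,q))$ is separately bounded by $\frac{p-q}{4}$ and $\frac{q}{4}$, which is more information than the bound on their sum.
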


\begin{proof}
	Note that $\Delta(p,q)=\Delta(p,p-q)$, since $L(p,q)\cong-L(p,p-q)$. Therefore, we may assume without loss of generality that $p>q/2>0$.  Furthermore, the recursive formula in Proposition \ref{prop:lensterms} is easy to understand for small values of $q$, and it can be checked that, $\Delta(p,q)\leq p/4$ whenever $q<8$.  (The calculations breaks down into a finite number of cases based on the values of $q$ and $r$.)
	
	Thus, we assume that $q\geq 8$.  From Equation \ref{eqn:lens}, we have
	$$ \Delta(p,q) \leq \max_{i,i'}\left\{\frac{(2i-p-q+1)^2-pq}{4pq}-\frac{(2i'-p-q+1)^2-pq}{4pq}\right\}+\Delta(q,r).$$
	The bracketed term is maximized when $i=0$ and $i'=(p+q-1)/2$ or $i'=(p+q)/2$, depending on whether $p$ and $q$ have opposite parity or not.  The former choice of $i'$ yields a more extremal value; so, in either case, we have
\begin{eqnarray*}
	\Delta(p,q) & \leq & \frac{(p+q-1)^2-pq}{4pq}-\frac{-pq}{4pq}+\Delta(q,r) \\
	& = & \frac{(p+q-1)^2}{4pq}+\Delta(q,r).
\end{eqnarray*} 
	Since $q<p$, we have $p+q-1<2p$, so
\begin{eqnarray*}
	\Delta(p,q) & \leq & \frac{4p^2}{4pq}+\Delta(q,r) \\
	& = & \frac{p}{q}+\Delta(q,r).
\end{eqnarray*} 
	Now, having shown that the claim holds for small values of $p$, we can proceed by induction and assume that $\Delta(q,r)\leq q/4$.  Since $q\geq 8$ and $p>q/2$, we have
%\begin{eqnarray*}
%	\Delta(p,q) & \leq & \frac{p}{8}+\frac{q}{4} \\
%	& \leq & \frac{p}{8}+\frac{p}{8} \\
%	& = & \frac{p}{4}.
%\end{eqnarray*}
$$ 	\Delta(p,q)  \leq  \frac{p}{8}+\frac{q}{4} 
	 \leq  \frac{p}{8}+\frac{p}{8} 
	 = \frac{p}{4}.
$$
\end{proof}

We remark that this bound is sharp, since $\Delta(p,1)$ is $p/4$ when $p$ is even and $(p^2-1)/4p$ when $p$ is odd.  However, this bound appears to be far from sharp once $q>1$.

\begin{reptheorem}{thm:twosummand}
	A slice knot in the three-sphere cannot admit a reducible surgery with three irreducible summands.
\end{reptheorem}

\begin{proof}
	Suppose that $K$ is a slice knot such that $S^3_r(K)$ has three irreducible summands. Since $S^3_{-r}(\overline K)\cong -S^3_r(K)$, there will be no loss of generality in assuming $r>0$.  Recall from the introduction that we must have at most two lens space summands and at most one non-lens space summand, which must be an integer homology sphere:
	$$S^3_{pq}(K)=L(p,a)\#L(q,b)\#Y.$$
	It follows that $r=pq$, with $p$ and $q$ coprime.  Since $Y$ is an integer homology sphere, it has only one correction term, $d(Y)$.  By Corollary \ref{coro:slice}, we know that $\mathcal D(S^3_r(K))=\mathcal D(L(r,1))$.  These two facts give us the following:
	\begin{equation}
		\mathcal D(L(pq,1))=\mathcal D(L(p,a)\#L(q,b))+d(Y).
	\end{equation}
	It follows that the ranges of values on the left and the right must match. In particular, since $d(Y)$ is constant, we must have 
	$$\Delta(pq,1)\leq\Delta(p,a)+\Delta(q,b).$$
	By Lemma \ref{lemma:range}, we have that $\Delta(p,a)\leq p/4$ and $\Delta(q,b)\leq q/4$, so we have
	\begin{equation}\label{eqn:ineq}
	\Delta(pq,1)\leq \frac{p+q}{4}.	
	\end{equation}
	Now, if $pq$ is even, we have $\Delta(pq,1)=pq/4$; so $pq\leq p+q$. If $pq$ is odd, we have $\Delta(pq,1)=(pq-1)(pq+1)/4pq$; so $pq-1\leq p+q$. It follows that either $p=1$, $q=1$, or $p=q=2$.  However, we have assumed that $p$ and $q$ are coprime.  Therefore, either $L(p,a)$ or $L(q,b)$ is $S^3$, and the proof is complete.
\end{proof}

%%%%%%%%%%%%%%%%%%%%%%%%%%%%%%%%%%%%%%%%%%%%%%%%%%%%%%%%
%%%%%%%%%%%%%%%%%%%%%%%%%%%%%%%%%%%%%%%%%%%%%%%%%%%%%%%%
\section{Proof of Theorem \ref{thm:cable}}\label{sec:proof2}
%%%%%%%%%%%%%%%%%%%%%%%%%%%%%%%%%%%%%%%%%%%%%%%%%%%%%%%%
%%%%%%%%%%%%%%%%%%%%%%%%%%%%%%%%%%%%%%%%%%%%%%%%%%%%%%%%

We are grateful to Chuck Livingston for pointing out that the following result should hold in the algebraically slice setting. Not only is this fact interesting in its own right, but it greatly simplifies the proof of the ensuing theorem, which was originally entirely Heegaard Floer theoretic.

\begin{proposition}\label{prop:alex}
	Let $K = J_{p,q}$ denote the $(p,q)$--cable of a knot $J$.  If $K$ is algebraically slice, then $q=1$.
\end{proposition}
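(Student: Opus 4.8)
The plan is to use the multiplicativity of the Alexander polynomial under cabling together with the Fox--Milnor condition for algebraically slice knots. Recall that if $K = J_{p,q}$ is the $(p,q)$--cable of $J$, with $p \geq 2$ the longitudinal winding number, then
\[
\Delta_K(t) = \Delta_J(t^p) \cdot \Delta_{T_{p,q}}(t),
\]
where $\Delta_{T_{p,q}}(t)$ is the Alexander polynomial of the $(p,q)$--torus knot, which (up to units) equals $\dfrac{(t^{pq}-1)(t-1)}{(t^p-1)(t^q-1)}$. Thus the whole argument reduces to extracting information from this factorization.

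First I would recall the Fox--Milnor theorem: if $K$ is algebraically slice, then $\Delta_K(t) \doteq f(t) f(t^{-1})$ for some integer polynomial $f$, and in particular $\Delta_K(1) = \pm 1$ (automatic for knots) and, more usefully, $\Delta_K$ has no irreducible symmetric factor appearing to an odd power — equivalently the $\Delta_K$ is a square in $\Q[t,t^{-1}]/(\text{units and symmetric pairing})$, so every root comes in pairs $\zeta, \zeta^{-1}$ counted with appropriate multiplicity structure. Since $\Delta_J(t^p)$ is already symmetric and a perfect "palindromic substitution," the relevant obstruction will come from the cyclotomic factors in $\Delta_{T_{p,q}}(t)$. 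I would compute the multiset of roots of $\Delta_{T_{p,q}}$: these are precisely the $pq$-th roots of unity that are not $p$-th roots of unity and not $q$-th roots of unity, each with multiplicity one. The key point is that when $q > 1$, this root set contains primitive $d$-th roots of unity for some $d \mid pq$ with $d \nmid p$ and $d \nmid q$, and these contribute a cyclotomic polynomial $\Phi_d(t)$ to $\Delta_K$ with an odd exponent that cannot be cancelled by the $\Delta_J(t^p)$ factor.

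The cleanest route is to evaluate at a suitable prime power or look at divisibility: the standard trick is that for the torus knot $T_{p,q}$ with $q>1$, $\Delta_{T_{p,q}}(-1)$ (or some $\Delta_{T_{p,q}}(\zeta)$) is a nonsquare, whereas algebraic sliceness forces $\Delta_K$ to be a norm and hence $|\Delta_K(-1)|$ to be a perfect square. Concretely, $\Delta_K(-1) = \Delta_J((-1)^p)\,\Delta_{T_{p,q}}(-1)$; one then checks $\Delta_{T_{p,q}}(-1)$ directly from the formula above. If $p$ is odd this is $\Delta_J(-1) \cdot \Delta_{T_{p,q}}(-1)$ and $\Delta_J(-1)$ is itself an odd square times a unit (Fox--Milnor applied to... no — rather, one uses that a slice-type square constraint on the product, combined with the known value $|\Delta_{T_{p,q}}(-1)| = q$ when $p$ is odd and $q$ odd, or $= 1$ when $q$ is even, forces the parity/square condition to fail unless $q=1$). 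If $p$ is even the substitution $t \mapsto -1$ degenerates ($\Delta_J(1)=1$), but then one instead uses an odd prime $\ell \mid p$ does not exist; instead evaluate at a primitive $q$-th root of unity argument, or use the determinant at a different root. The main obstacle I anticipate is precisely handling the case where the naive evaluation point is degenerate: I would resolve it by choosing the evaluation at a primitive $r$-th root of unity for a prime $r$ dividing $q$ but not $p$ — such an $r$ exists since $q>1$ and $\gcd$ considerations pin down a cyclotomic factor $\Phi_r(t)$ of $\Delta_{T_{p,q}}$ occurring once and not occurring in $\Delta_J(t^p)$ unless $r \mid p$ — giving an odd-multiplicity symmetric factor and contradicting Fox--Milnor. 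Packaging this uniformly across the parity cases is the only delicate point; the rest is bookkeeping with cyclotomic polynomials.
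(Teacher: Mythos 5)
Your overall skeleton---the cabling formula $\Delta_K(t)=\Delta_J(t^p)\,\Delta_{T_{p,q}}(t)$, Fox--Milnor, and the principle that a symmetric irreducible factor of the Alexander polynomial of an algebraically slice knot must occur to even multiplicity---is the right one, and it is essentially the framework of the paper's proof. But both concrete routes you propose break down. The determinant route fails outright: the values you quote are wrong ($|\Delta_{T_{p,q}}(-1)|$ equals $1$ when $p$ and $q$ are both odd, equals $q$ when $p$ is even, and equals $p$ when $q$ is even), and in the both-odd case $\Delta_K(-1)=\Delta_J(-1)\cdot 1$ carries no information about $q$ at all (for instance $\det T_{3,5}=1$ is already a perfect square), so no evaluation at $t=-1$ can prove the proposition.

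The cyclotomic fallback also misfires at the decisive step. For a prime $r$ dividing $q$, a primitive $r$-th root of unity \emph{is} a $q$-th root of unity, so $\Phi_r(t)$ is not a factor of $\Delta_{T_{p,q}}(t)$; the cyclotomic factors of $\Delta_{T_{p,q}}$ are exactly the $\Phi_d$ with $d\mid pq$, $d\nmid p$, $d\nmid q$. Moreover, your blanket claim that such a $\Phi_d$ contributes an odd exponent that ``cannot be cancelled'' by $\Delta_J(t^p)$ is false in general: writing $d=d_1d_2$ with $d_1\mid p$, $d_2\mid q$, the multiplicity of $\Phi_d$ in $\Delta_J(t^p)$ equals the multiplicity of $\Phi_{d_2}$ in $\Delta_J$, which can be odd---e.g.\ for $J$ the trefoil, $p=5$, $q=6$, $d=30$ one has $\Phi_6(t^5)=\Phi_{30}(t)\Phi_6(t)$, so $\Phi_{30}$ occurs in $\Delta_K$ with even multiplicity and yields no contradiction. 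The missing idea, which is the heart of the paper's argument, is to choose $d=pq_1$ with $q_1$ a \emph{prime} factor of $q$: then $\Phi_d$ divides $\Delta_{T_{p,q}}$ exactly once, and if it divided $\Delta_J(t^p)$ then $\Phi_{q_1}$ would divide $\Delta_J$, forcing $\Phi_{q_1}(1)=q_1$ to divide $\Delta_J(1)=\pm1$, a contradiction. With that choice the odd-multiplicity/Fox--Milnor contradiction goes through for every prime factor of $q$ and gives $q=1$; without it, neither of your proposed evaluations establishes the claim.
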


\begin{proof}
	Suppose that $K=J_{p,q}$ is algebraically slice.  Since $K$ is algebraically slice, we have that $\Delta_K(t) = f(t)f(t^{-1})$ for some $f(t)\in\Z[t]$. Furthermore, since $K$ is a cable we can write $\Delta_K(t) = \Delta_J(t^p)\Delta_{T_{p,q}}(t)$.  It follows that
	$$\Delta_J(t^p)\Delta_{T_{p,q}}(t) = f(t)f(t^{-1}).$$
	
	Let $\xi$ be a root of $\Delta_{T_{p,q}}(t)$.  This means that $\xi$ is a $pq^\text{th}$--root of unity that is neither a $p^\text{th}$--root of unity nor a $q^\text{th}$--root of unity.  It follows that $\xi$ is a root of $f(t)f(t^{-1})$.  Without loss of generality, we can assume that $f(\xi)=0$.  It follows that $f(\overline\xi)=0$ as well, and we note that $\overline\xi=\xi^{-1}$.  It follows that $\xi$ is a root of both $f(t)$ and $f(t^{-1})$. Since $\xi$ has multiplicity one as a root of $\Delta_{T_{p,q}}(t)$, and since we have observed that it has multiplicity two as a root of $f(t)f(t^{-1})$, it follows that $\xi$ is a root of $\Delta_J(t^p)$.  Thus, $\xi^p$ is a root of $\Delta_J(t)$.  This is true of any root $\xi$ of $\Delta_{T_{p,q}}$.
	
	Now, let $q_1$ be a prime factor of $q$, and let $\xi=e^{2\pi i/pq_1}$.  It follows that $\xi^{pq}=1$, but $\xi^q\not=1$ and $\xi^p\not=1$, so $\xi$ is a root of $\Delta_{T_{p,q}}(t)$.  Therefore, $\xi^p$ is a root of $\Delta_J(t)$. However, $\xi^p$ is a primitive $q_1^\text{th}$--root of unity and $q_1$ is prime.  This implies that the cyclotomic polynomial $\Phi_{q_1}(t)$ divides $\Delta_J(t)$, which in implies that $\Phi_{q_1}(1)=q_1$ divides $\Delta_J(1)$, which is one, since $\Delta_J(t)$ is an Alexander polynomial. It follows that $q_1=1$ and that this conclusion must hold for all prime factors of $q$.  Therefore, we must have $q=1$, as desired.
\end{proof}

\begin{reptheorem}{thm:cable}
	Suppose that $K$ is a positive cable of a knot $J$. If $K$ is slice, then $V_0(J)=0$.
\end{reptheorem}

\begin{proof}
	If $K$ is the $(p,q)$--cable of $J$, and $K$ is algebraically slice, then $q=1$ by Proposition~\ref{prop:alex}.  It follows that
	$$S^3_p(K)=L(p,1)\#S^3_{1/p}(J).$$
	If $K$ is slice, then Corollary \ref{coro:slice} tells us that
	$$\mathcal D(L(p,1)) = \mathcal D(L(p,1))+d(S^3_{1/p}(J)).$$
	It follows that $d(S^3_{1/p}(J))=0$, which is equivalent to $V_0(J)=0$ \cite{oz-sz:integer,oz-sz:rational}.
\end{proof}

%%%%%%%%%%%%%%%%%%%%%%%%%%%%%%%%%%%%%%%%%%%%%%%%%%%%%%%%
%%%%%%%%%%%%%%%%%%%%%%%%%%%%%%%%%%%%%%%%%%%%%%%%%%%%%%%%
%%%%%%%%%%%%%%%%%%%%%%%%%%%%%%%%%%%%%%%%%%%%%%%%%%%%%%%%

\bibliographystyle{abbrv-fr}
\bibliography{CabledSlice}

\end{document}